\title{Borel Vizing's Theorem for Graphs \\ of Subexponential Growth}
\date{}
\author{\lsstyle Anton~Bernshteyn}
\email{bernshteyn@math.ucla.edu}
\author{\lsstyle Abhishek~Dhawan}
\email{adhawan2@illinois.edu}
\address{\normalfont{(AB) Department of Mathematics, University of California, Los Angeles, CA, USA}}
\address{\normalfont{(AD) Department of Mathematics, University of Illinois at Urbana--Champaign, IL, USA}}
\thanks{AB's research is partially supported by the NSF grant DMS-2045412 and the NSF CAREER grant DMS-2239187.}
\newtheoremstyle{bfnote}%
{}{}%
{\slshape}{}%
{\bfseries}{\bfseries.}%
{ }%
{\thmname{#1}\thmnumber{ #2}\thmnote{ \ep{\normalfont{}#3}}}
\newtheoremstyle{claim}%
{}{}%
{\slshape}{}%
{\itshape}{.}%
{ }%
{\thmname{#1}\thmnumber{ #2}\thmnote{ \ep{\normalfont{}#3}}}
\theoremstyle{bfnote}
\newtheorem{theo}{Theorem}[section]
\newtheorem*{theo*}{Theorem}
\newtheorem{Lemma}[theo]{Lemma}
\newtheorem{conj}[theo]{Conjecture}
\newtheorem*{corl*}{Corollary}
\theoremstyle{definition}
\newtheorem{defn}[theo]{Definition}
\newtheorem*{defn*}{Definition}
\newtheorem*{exmp*}{Example}
\theoremstyle{remark}
\newtheorem*{ques*}{Question}
\newtheorem*{remk*}{Remark}
\theoremstyle{claim}
\newcounter{ForClaims}[section]
\newtheorem*{claim*}{Claim}
\newcommand{\neutralize}[1]{\expandafter\let\csname c@#1\endcsname\count@}
\newcommand{\set}[1]{\{#1\}}
\newcommand{\N}{{\mathbb{N}}}
\renewcommand{\epsilon}{\varepsilon}
\renewcommand{\phi}{\varphi}
\renewcommand{\theta}{\vartheta}
\renewcommand{\leq}{\leqslant}
\renewcommand{\geq}{\geqslant}
\newcommand{\defeq}{\coloneqq}
\newcommand{\bemph}[1]{{\normalfont#1}} % define how emphasised brackets should look
\newcommand{\ep}[1]{\bemph{(}#1\bemph{)}} % parentheses
\newcommand{\pto}{\dashrightarrow}
\newcommand{\emphdef}[1]{\textbf{\textit{{#1}}}}
\newcommand{\dom}{\mathsf{dom}}
\numberwithin{equation}{section}
\newcommand{\emphd}[1]{\emphdef{#1}}
\newcommand{\poly}{\mathsf{poly}}
\newcommand{\LOCAL}{$\mathsf{LOCAL}$\xspace}
\newcommand{\rest}[2]{{{#1}\vert_{#2}}}
\titleformat{\section}[block]{\scshape}{\thesection.}{1ex}{}
\titleformat{\subsection}[block]{\bfseries}{\thesubsection.}{1ex}{}
\titleformat{\subsection}[block]{\bfseries}{\thesubsection.}{1ex}{}
\titleformat{\subsubsection}[runin]{\itshape}{\bfseries\upshape\thesubsubsection.}{1ex}{}[.---]
\titlespacing*{\section}{0pt}{*3}{*1}
\titlespacing*{\subsection}{0pt}{*3}{*1}
\titlespacing*{\subsubsection}{0pt}{*1.5}{*0}
\setlist{topsep=3pt,itemsep=3pt}
\begin{document}

    %\vspace*{-8pt}

\maketitle

    %\vspace*{-13pt}

\begin{abstract}
    We show that every Borel graph $G$ of subexponential growth has a Borel proper edge-coloring with $\Delta(G) + 1$ colors. We deduce this from a stronger result, namely that an $n$-vertex (finite) graph $G$ of subexponential growth can be properly edge-colored using $\Delta(G) + 1$ colors by an $O(\log^\ast n)$-round deterministic distributed algorithm in the \LOCAL model, where the implied constants in the $O(\cdot)$ notation are determined by a bound on the growth rate of $G$.
\end{abstract}

\section{Introduction}

    In this note we study a classical concept in graph theory---namely proper edge-colorings---from the perspective of descriptive set theory. This line of inquiry forms part of the active and growing field of \emphd{descriptive combinatorics}, which was created in the seminal work of Kechris, Solecki, and Todorcevic \cite{KST}. For surveys of this area, see \cite{KechrisMarks} by Kechris and Marks and \cite{Pikh_survey} by Pikhurko. We use standard terminology from graph theory \cite{Diestel,BondyMurty} and from descriptive set theory \cite{KechrisDST,AnushDST}. A graph $G$ consists of a vertex set $V(G)$ and an edge set $E(G) \subseteq [V(G)]^2$, where for a set $X$, we write $[X]^2$ to denote the set of all $2$-element subsets of $X$. We shall mostly be concerned with infinite (in fact, uncountable) graphs in this paper. However, all graphs we shall work with will have finite {maximum degree}, i.e., there will be a uniform finite upper bound on the number of neighbors of every vertex. As usual, $\N$ is the set of all non-negative integers and each $q \in \N$ is identified with the $q$-element set $q = \set{i \in \N \,:\, i < q}$.
    
    \begin{defn}[Edge-colorings and chromatic index]
        Let $G$ be a graph and let $q \in \N$. A \emphd{proper $q$-edge-coloring} of $G$ is a function $\phi \colon E(G) \to q$ such that $\phi(e) \neq \phi(e')$ whenever $e$, $e' \in E(G)$ are distinct edges that share an endpoint. The \emphd{chromatic index} of $G$, denoted by $\chi'(G)$, is the minimum $q \in \N$ such that $G$ has a proper $q$-edge-coloring (if no such $q \in \N$ exists, we set $\chi'(G) \defeq \infty$).
    \end{defn}

    If $G$ is a graph of finite maximum degree $\Delta$, then $\chi'(G) \geq \Delta$, since all edges incident to a vertex of degree $\Delta$ must be colored differently. Vizing famously proved an upper bound on $\chi'(G)$ that is only $1$ larger than this trivial lower bound:
    
    \begin{theo}[{Vizing \cite{Vizing}}]\label{theo:Vizing}
        If $G$ is a graph of finite maximum degree $\Delta$, then $\chi'(G) \leq \Delta + 1$.
    \end{theo}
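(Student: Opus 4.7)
The plan is to first reduce to the case of finite graphs via a standard compactness argument, and then prove the finite case by induction on $|E(G)|$. For the reduction, I would observe that being a proper $(\Delta+1)$-edge-coloring is a closed condition on the product space $(\Delta+1)^{E(G)}$ (it is determined by finitely many constraints at each vertex), so by a compactness / De Bruijn--Erd\H{o}s-type argument it suffices to properly $(\Delta+1)$-edge-color every finite subgraph $G'$ of $G$. Note $\Delta(G') \leq \Delta$, so this reduction is clean.

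For a finite graph with $m$ edges, I would argue by induction on $m$. Delete an edge $e = uv_0$, apply induction to obtain a proper $(\Delta+1)$-edge-coloring $\phi$ of $G - e$, and seek to modify $\phi$ so as to extend it across $e$. Write $M(x) \subseteq \{0,1,\dots,\Delta\}$ for the colors \emph{missing} at a vertex $x$; since $\deg(x) \leq \Delta$, each $M(x)$ is nonempty. If $M(u) \cap M(v_0) \neq \emptyset$ we are done immediately by coloring $e$ with a common missing color.

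Otherwise, the key construction is a \emph{Vizing fan} rooted at $u$: a maximal sequence $v_0, v_1, \dots, v_k$ of distinct neighbors of $u$ together with colors $\alpha_0, \alpha_1, \dots, \alpha_{k-1}$ such that $\alpha_i \in M(v_i)$ and $\phi(uv_{i+1}) = \alpha_i$ for all $i < k$. The fan is extended greedily: having picked $\alpha_i \in M(v_i)$, if $\alpha_i \in M(u)$ we stop and \emph{shift} the fan, reassigning $\phi(uv_{j+1}) \mapsto \alpha_j$ for $0 \leq j \leq i$ and coloring $e$ with the freed color; otherwise $\alpha_i = \phi(uv_{i+1})$ for a unique new neighbor $v_{i+1}$, and we continue. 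If the process never produces a shift color, then by pigeonhole the fan eventually repeats: some $\alpha_k \in M(v_k)$ equals $\alpha_{j-1}$ for a minimal $j \geq 1$, so $\alpha_k = \phi(uv_j)$.

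This is the step I expect to be the main obstacle, and it is handled via a \emph{Kempe chain} swap. Fix $\beta \in M(u)$ (disjoint from $M(v_0)$ by the above case) and set $\alpha \defeq \alpha_k$, and let $P$ be the maximal path starting at $u$ whose edges alternate in colors $\alpha$ and $\beta$. Since $\beta \in M(u)$, the edge of $P$ at $u$ has color $\alpha$, so $P$ cannot end at $u$; the case analysis then splits on whether $P$ terminates at $v_j$ or at $v_k$ (the two vertices where $\alpha \in M(\cdot)$ or is used, modulo the fan structure). Swapping $\alpha$ and $\beta$ along $P$ yields a new proper coloring of $G - e$ in which exactly one of the prefixes $(v_0,\dots,v_{j-1})$ or $(v_0,\dots,v_k)$ becomes a shiftable fan, completing the coloring of $e$. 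Verifying that in each terminal case the truncated fan still satisfies the missing-color conditions after the swap (the swap only alters edges in $P$, and the fan structure on the unaffected prefix is preserved) is the delicate bookkeeping at the heart of the argument.
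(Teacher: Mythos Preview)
The paper does not supply its own proof of Theorem~\ref{theo:Vizing}; it is quoted as a classical background result, with pointers to Vizing's original article and to the textbook treatments in Bondy--Murty \S17.2 and Diestel \S5.3. So there is nothing in the paper itself to compare against beyond those citations.

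Your outline is precisely the standard argument found in those references: a De~Bruijn--Erd\H{o}s/compactness reduction to the finite case, then induction on $|E(G)|$ via a Vizing fan and an $\alpha\beta$-Kempe chain swap. Two bookkeeping points to tighten. First, the fan shift is mis-indexed: when $\alpha_i \in M(u)$ you want to set $\phi(uv_j) \defeq \alpha_j$ for $0 \leq j \leq i$ (so $e = uv_0$ gets $\alpha_0$ and $uv_i$ gets $\alpha_i \in M(u)\cap M(v_i)$); as written, your rule $\phi(uv_{j+1}) \mapsto \alpha_j$ is a no-op for $j<i$ (that is already the color of $uv_{j+1}$) and refers to a nonexistent $v_{i+1}$ when $j=i$. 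Second, in the Kempe-chain step the $\alpha\beta$-path from $u$ necessarily begins with the edge $uv_j$ (since $\beta\in M(u)$ and $\phi(uv_j)=\alpha$), so ``terminates at $v_j$'' is not the right dichotomy; the clean split is whether the $\alpha\beta$-path from $v_k$ reaches $u$ or not, and in each case one of the prefixes $(v_0,\dots,v_{j-1})$ or $(v_0,\dots,v_k)$ becomes shiftable after the swap. These are cosmetic; the plan is sound.
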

    
    See \cite[\S{}A.1]{EdgeColoringMonograph} for an English translation of Vizing's original paper and \cites[\S17.2]{BondyMurty}[\S5.3]{Diestel} for modern textbook presentations. 

    We are interested in edge-colorings of Borel graphs:

    \begin{defn}[Borel graphs and their edge-colorings]
        A graph $G$ is \emphd{Borel} if $V(G)$ is a standard Borel space and $E(G)$ is a Borel subset of $[V(G)]^2$. The \emphd{Borel chromatic index} $\chi_\mathsf{B}'(G)$ of $G$ is the smallest $q \in \N$ such that $G$ has a Borel proper $q$-edge-coloring $\phi \colon E(G) \to q$, meaning that $\phi^{-1}(i)$ is a Borel subset of $E(G)$ for all $0 \leq i < q$. If no such $q \in \N$ exists, we set $\chi'_\mathsf{B}(G) \defeq \infty$.
    \end{defn}

    The best general upper bound on $\chi'_\mathsf{B}(G)$ in terms of the maximum degree of $G$ is $2\Delta - 1$:

    \begin{theo}[{Kechris--Solecki--Todorcevic \cite[15]{KST}, Marks \cite[Theorem 1.4]{Marks}}]\label{theo:BorelEdgeColoring}
        Fix an integer $\Delta \in \N$.

        \begin{enumerate}[label=\ep{\itshape\roman*}]
            \item Every Borel graph $G$ of maximum degree $\Delta$ satisfies $\chi'_\mathsf{B}(G) \leq 2\Delta - 1$.

            \item On the other hand, there exists a Borel graph $G$ of maximum degree $\Delta$ with $\chi'_\mathsf{B}(G) = 2\Delta - 1$.
        \end{enumerate}
    \end{theo}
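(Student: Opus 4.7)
The plan is to handle the two parts separately.

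For part (i), I would reduce edge-coloring to vertex-coloring via the line graph construction. Let $L(G)$ be the graph whose vertex set is $E(G)$ and in which $\set{e, e'}$ is an edge iff $e \cap e' \neq \0$ in $G$. Since every edge of $G$ shares an endpoint with at most $2(\Delta - 1) = 2\Delta - 2$ other edges, the maximum degree of $L(G)$ is at most $2\Delta - 2$. Moreover, because $E(G) \subseteq [V(G)]^2$ is standard Borel and the adjacency relation in $L(G)$ is Borel, $L(G)$ is a Borel graph. I would then invoke the vertex-coloring version of the Kechris--Solecki--Todorcevic theorem: every Borel graph of finite maximum degree $d$ admits a Borel proper vertex-coloring with $d + 1$ colors. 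Applied to $L(G)$, this immediately yields the desired Borel proper $(2\Delta - 1)$-edge-coloring of $G$.

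For part (ii), I would follow Marks's Borel-determinacy technique. The construction produces a $\Delta$-regular acyclic Borel graph $G$ as (essentially) the Schreier graph of a shift action of the free product group $\Gamma \defeq (\Z/2)^{\ast \Delta}$ on a sufficiently generic free Borel $\Gamma$-space, such as the free part of a Bernoulli shift $k^\Gamma$. To rule out Borel proper edge-colorings with fewer than $2\Delta - 1$ colors, I would argue by contradiction: assuming a Borel proper $\phi \colon E(G) \to 2\Delta - 2$ exists, one sets up a Borel two-player game played along an orbit, in which the players alternately select generators of $\Gamma$ (and possibly values of $\phi$) and the payoff set encodes the failure of $\phi$ to be proper at some configuration of adjacent edges. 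Borel determinacy provides a winning strategy for one of the players, which one then translates into an explicit Borel construction of two adjacent edges of $G$ sharing a $\phi$-color, yielding the desired contradiction.

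The main obstacle is part (ii), where the specific design of the game and the extraction of the contradiction require substantial care; in particular, the game must be calibrated so that the threshold at $2\Delta - 2$ colors is exactly what the argument outputs, which reflects the fact that at each chosen edge there are $2(\Delta - 1)$ neighboring edges whose colors must be avoided. Part (i), by contrast, is essentially a direct corollary of the vertex-coloring theorem for bounded-degree Borel graphs applied to the line graph.
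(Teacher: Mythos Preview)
The paper does not give its own proof of this theorem: it is stated as a background result with citations to Kechris--Solecki--Todorcevic and to Marks, and no argument is supplied. So there is nothing in the paper to compare your proposal against.

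That said, your sketch is in line with how the cited results are actually established. For part~(i), passing to the line graph $L(G)$, observing $\Delta(L(G)) \leq 2\Delta - 2$, checking that $L(G)$ is a Borel graph, and applying the Kechris--Solecki--Todorcevic bound $\chi_\mathsf{B}(H) \leq \Delta(H) + 1$ is exactly the standard derivation. For part~(ii), your description of Marks's construction---the Schreier graph of the shift action of $(\Z/2)^{\ast \Delta}$ on the free part of a Bernoulli shift, together with a Borel-determinacy game argument---is accurate at the level of an outline. Your caveat is well placed: the delicate content is entirely in designing the game so that a Borel winning strategy for either player can be converted into a contradiction with a putative Borel proper $(2\Delta-2)$-edge-coloring, and your paragraph does not yet specify the payoff condition or how the strategies are spliced together across the $\Delta$ generators. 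If you intend to actually write out part~(ii), that is where essentially all of the work lies; as written it is a pointer to Marks's paper rather than a proof.
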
    

    It has been a matter of interest to discover whether in some special cases the bound on $\chi'_\mathsf{B}(G)$ given by Theorem~\ref{theo:BorelEdgeColoring} can be lowered, ideally all the way down to Vizing's bound $\Delta + 1$. For example, Greb\'ik \cite{GmeasVizing} recently showed that the Borel chromatic index of any Borel graph can be reduced to $\Delta + 1$ by throwing away a set of vertices of measure $0$ (this result builds on the earlier breakthrough work of Greb\'ik and Pikhurko \cite{GP}). %we have the following result:
%
    %\begin{theo}[{Greb\'ik \todo{cite}}]\label{theo:measurableVizing}
    %    Let $G$ be a Borel graph of finite maximum degree $\Delta$. For every probability Borel measure $\mu$ on $V(G)$, there exists a Borel set $U \subseteq V(G)$ with $\mu(U) = 1$ such that the subgraph $G[U]$ of $G$ induced by $U$ satisfies $\chi'_\mathsf{B}(G[U]) \leq \Delta + 1$.
    %\end{theo}    
%
    %In other words, the Borel chromatic index of any Borel graph can reduced to $\Delta + 1$ by throwing away a set of vertices of measure $0$. Greb\'ik's paper \todo{cite} builds on the earlier breakthrough work of Greb\'ik and Pikhurko \todo{cite} who proved Theorem~\ref{theo:measurableVizing} under a certain additional assumption on the measure $\mu$ (namely that it is $G$-invariant).
    In a similar vein, Qian and Weilacher \cite{ASIalgorithms} showed that the bound can be reduced to $\Delta + 2$ after removing a meager set of vertices (in the sense of Baire category). Alternatively, instead of discarding a small set of vertices, one may try to prove stronger upper bounds on $\chi'_\mathsf{B}(G)$ under extra assumptions on the structure of $G$; see, e.g., \cite{BWKonig,ASIalgorithms,FelixFinDim} for some instances of this approach. Our main result is of this type: we establish the bound $\chi'_\mathsf{B}(G) \leq \Delta + 1$ for Borel graphs $G$ of subexponential growth.
    
    Given a graph $G$, a vertex $v$, and an integer $R \in \N$, we write $N_G^R[v]$ for the \emphd{closed $R$-neighborhood of $v$}, i.e., the set of all vertices reachable from $v$ by a path of at most $R$ edges.

    \begin{defn}[Subexponential growth]\label{defn:subexp}
    A function $f \colon \N \to \N$ is \emphd{subexponential} if
    \begin{equation}\label{eq:Reps}
        \forall \epsilon > 0,\, \exists R(\epsilon) \in \N \text{ such that } \forall R\geq R(\epsilon),\ f(R) < \exp(\epsilon\,R).
    \end{equation}
    A function $f \colon \N \to \N$ \emphd{bounds the growth} of a graph $G$ if
    \[\forall v \in V(G),\, \forall R \in \N, \  |N_G^R[v]| \leq f(R).\]
    A graph is \emphd{of subexponential growth} if its growth is bounded by a subexponential function.
    %A graph $G$ is \emphd{of subexponential growth} if there is a subexponential function $f \colon \N \to \N$ such that
    \end{defn}

    The main result of this paper is as follows:

    \begin{tcolorbox}
    \begin{theo}[Borel Vizing's theorem for subexponential growth graphs]\label{theo:main_theorem}
        If $G$ is a Borel graph of subexponential growth and of finite maximum degree $\Delta$, then $\chi_\mathsf{B}'(G) \leq \Delta + 1$.
    \end{theo}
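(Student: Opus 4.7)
The strategy follows the two-step outline announced in the abstract. First, we establish the finitary statement: for every subexponential $f \colon \N \to \N$ and every $\Delta \in \N$, any finite $n$-vertex graph $G$ with maximum degree $\leq \Delta$ and growth bounded by $f$ admits a proper $(\Delta+1)$-edge-coloring computable by a deterministic \LOCAL algorithm in $O_{f,\Delta}(\log^\ast n)$ rounds, with the implied constants depending only on $f$ and $\Delta$. Second, we apply a transfer principle from descriptive combinatorics that upgrades an $O(\log^\ast n)$-round \LOCAL algorithm on uniformly bounded-degree graphs into a Borel construction on bounded-degree Borel graphs. The transfer uses a Borel proper vertex coloring with $O(\Delta)$ colors (available by Kechris--Solecki--Todorcevic \cite{KST}) as a substitute for the distinct node identifiers in the \LOCAL model, and then composes finitely many Borel refinement stages simulating the $\log^\ast n$ rounds. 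The subexponential growth of $G$ is inherited pointwise so that the finitary theorem applies uniformly in the ball around every vertex.

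The main work is the first step. The plan is to build on the distributed Vizing algorithms initiated by Bernshteyn, which yield proper $(\Delta+1)$-edge-colorings in $\poly(\Delta, \log n)$ rounds via iterated augmentations along Vizing fans and alternating chains. To push the complexity down to $O(\log^\ast n)$ under subexponential growth, the plan is to \emph{(a)} compute in $O(\log^\ast n)$ rounds (via network decomposition of the line graph) an initial partial $(\Delta+1)$-edge-coloring whose uncolored edges form a sparse remainder; \emph{(b)} in parallel, for each uncolored edge locate a Vizing augmenting chain of length at most a constant $C = C(f, \Delta)$; and \emph{(c)} resolve the conflicts between simultaneous augmentations by invoking the distributed \LLL\ on the dependency graph of chain-conflict events, whose sparsity is forced by the growth bound. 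Iterating a constant number of such batches empties the set of uncolored edges.

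The hardest point is step \emph{(b)}: proving that Vizing's classical augmentation can always be truncated at a length depending only on $f$ and $\Delta$, without losing solvability. Naive Vizing chains can have length as large as $\Delta$, so the novelty must lie in using subexponential growth to force the existence of a short augmenting substructure---presumably by arguing that a long alternating chain confined to a slowly growing ball must either close into a Kempe cycle (yielding a shorter augmentation) or reveal repeated structure that the growth function $f$ forbids. Once a growth-dependent bound on useful chain lengths is in place, the distributed \LLL\ step, the network-decomposition preprocessing on the line graph, and the final lift to Borel graphs all proceed along well-established lines; the remaining care is to keep every constant a function of $f$ and $\Delta$ alone, independently of $n$ or of the particular structure of $G$ beyond the growth bound.
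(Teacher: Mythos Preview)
Your overall architecture---prove a deterministic $O(\log^\ast n)$ \LOCAL algorithm and then transfer to Borel via \cite{BernshteynDistributed}---matches the paper exactly, and your description of the transfer step is adequate. The gap is entirely in step \emph{(b)} of your finitary argument, and it is a real one.

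You write that ``naive Vizing chains can have length as large as $\Delta$.'' This is not right: the \emph{fan} part has length at most $\Delta$, but the alternating path appended to it can have length $\Theta(n)$ in general. Your proposed fix---that a long alternating chain ``confined to a slowly growing ball'' must close into a Kempe cycle or exhibit forbidden repetition---does not hold, because a path of length $L$ is only guaranteed to stay inside a ball of radius $L$, and subexponential growth gives $|N^L[x]|\leq f(L)$ without bounding $L$ by any constant. Nothing in your outline forces the alternating path to live in a ball whose radius is independent of $n$. The paper supplies the missing ingredient: Christiansen's theorem that in \emph{any} $n$-vertex graph of maximum degree $\Delta$, every uncolored edge has an augmenting subgraph with at most $C\Delta^7\log n$ edges. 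One then applies this inside the ball $H \defeq G[N_G^R[x]\cup N_G^R[y]]$ of radius $R$ around the uncolored edge; since $|V(H)|<\exp(2\epsilon R)$ by subexponential growth, the augmenting subgraph has at most $C\Delta^7\cdot 2\epsilon R$ edges, and choosing $\epsilon = 1/(3C\Delta^7)$ makes this at most $R-1$, hence contained in the ball. This self-referential choice of $R$ is the crux, and it needs the $\log n$ bound (a polylog bound would also work, but a $\poly(n)$ bound would not close the loop).

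Two smaller points. First, the distributed \LLL\ and network-decomposition preprocessing you invoke are unnecessary and in fact problematic as stated (deterministic network decomposition is not known in $O(\log^\ast n)$ rounds). The paper does something much simpler: once augmenting subgraphs have radius at most $R$, one properly colors the edge graph $G^\ast$ (two edges adjacent if joined by a path of length $\leq 2R$) with $(2\Delta)^{2R}$ colors via Goldberg--Plotkin--Shannon in $O_f(\log^\ast n)$ rounds, then processes the color classes sequentially; edges in the same class have vertex-disjoint augmenting subgraphs by construction, so no conflict resolution is needed. Second, your sentence ``the subexponential growth of $G$ is inherited pointwise'' is correct but is not where subexponential growth is actually used; it enters only in the size computation above.
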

    \end{tcolorbox}

    Theorem~\ref{theo:main_theorem} contributes to the growing body of research showing that various combinatorial problems can be solved in a Borel way on graphs of subexponential growth; see, e.g., \cite{ConleyTamuz,CGMPT, Thornton}.
    
    We deduce Theorem~\ref{theo:main_theorem} from a related result in \emphd{distributed computing}---an area of computer science concerned with problems that can be solved efficiently by a decentralized network of processors. The connection between descriptive combinatorics and distributed computing has been discovered by the first named author \cite{BernshteynDistributed} and is actively studied by both descriptive set-theorists and computer scientists \cite{Ber_cont, trees, GRgrids, FelixComput}. It is now understood that efficient distributed algorithms can often be used in a ``black box'' manner to derive results in descriptive combinatorics \cite{BernshteynDistributed}, and sometimes one can also go in the opposite direction \cite{Ber_cont,trees,GRgrids}.

    The relevant model of distributed computation, introduced by  Linial in \cite{Linial}, is called \LOCAL. %It was introduced by  Linial in \cite{Linial} (although there are some earlier related results, e.g., by Alon, Babai, and Itai \cite{ABI}, Luby \cite{Luby}, and Goldberg, Plotkin, and Shannon \cite{GPSh}).
    For an introduction to this subject, see the book \cite{BE} by Barenboim and Elkin. In this model an $n$-vertex (finite) graph $G$ abstracts a communication network where each vertex plays the role of a processor and edges represent communication links. The computation proceeds in synchronous \emphd{rounds}. During each round, the vertices first perform some local computations and then simultaneously broadcast messages to their neighbors. There are no restrictions on the complexity of the local computations or on the length of the messages. After a certain number of rounds, each vertex must output its part of the global solution (for instance, its own color or, in the context of edge-coloring, the colors of the edges incident to it). The efficiency of such an algorithm is measured by the number of communication rounds required, as a function of $n$.
	
	An important feature of the \LOCAL model is that every vertex of $G$ is executing the same algorithm. Therefore, to make this model nontrivial, the vertices must be given a way of breaking symmetry. In the \emphd{deterministic} variant of the model, this is achieved by assigning a unique identifier $\mathsf{ID}(v) \in \set{1,\ldots, n}$ to each vertex $v \in V(G)$.\footnote{Sometimes the range of the identifiers is taken to be $\set{1, \ldots,  n^c}$ for some constant $c \geq 1$, but this does not affect the model significantly.} 
	The identifier assigned to a vertex $v$ is treated as part of $v$'s input; that is, $v$ ``knows'' its own identifier and can communicate this information to its neighbors. When we say that a deterministic \LOCAL algorithm \emphd{solves} a problem $\Pi$ on a given class $\mathbb{G}$ of finite graphs, we mean that its output on any graph from $\mathbb{G}$ is a valid solution to $\Pi$, regardless of the way the identifiers are assigned. The word ``deterministic'' distinguishes this model from the \emph{randomized} version, where the vertices are allowed to generate sequences of random bits. In this paper we shall only be concerned with deterministic algorithms.

    An important observation is that if $u$ and $v$ are two vertices whose graph distance in $G$ is greater than $T$, then no information from $u$ can reach $v$ in fewer than $T$ communication rounds \ep{this explains the name ``\LOCAL''}. Conversely, every $T$-round \LOCAL algorithm can be transformed into one in which every vertex first collects all the data present in its closed $T$-neighborhood and then makes a decision, based on this information alone, about its part of the output (see \cite[\S4.1.2]{BE}).

    The study of \LOCAL algorithms for edge-coloring has a long history. We direct the reader to \cite{CHLPU, GKMU} for thorough surveys and \cite{BDVizing,DaviesSODA} for more recent developments. The case of particular interest to us is when the maximum degree $\Delta$ is treated as a fixed parameter. Similar to the situation with Borel edge-colorings, $2\Delta - 1$ is an important threshold:

    \begin{theo}[{Panconesi--Rizzi \cites{PR}[Theorem 8.5]{BE},  Chang--He--Li--Pettie--Uitto \cite{CHLPU}}]\label{theo:2d-1_alg}
        Fix and integer $\Delta \in \N$.

        \begin{enumerate}[label=\ep{\itshape\roman*}]
            \item\label{item:2d-1_alg} There is a deterministic \LOCAL algorithm that finds a proper $(2\Delta-1)$-edge-coloring of an $n$-vertex graph of maximum degree $\Delta$ in $O(\Delta + \log^\ast n)$ rounds.

            \item On the other hand, every deterministic \LOCAL algorithm for $(2\Delta - 2)$-edge-coloring $n$-vertex graphs of maximum degree $\Delta$ requires at least $\Omega(\log n/\log\Delta)$ rounds.
        \end{enumerate}
    \end{theo}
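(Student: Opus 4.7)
The plan is to reduce the edge-coloring problem to a vertex-coloring problem on the line graph $L(G)$, whose maximum degree is at most $d \defeq 2\Delta - 2$. A proper $(2\Delta-1)$-edge-coloring of $G$ is precisely a proper $(d+1)$-vertex-coloring of $L(G)$, so the task becomes computing a $(\Delta(L(G))+1)$-coloring of $L(G)$ in $O(\Delta + \log^\ast n)$ rounds. Since $|V(L(G))| = |E(G)| \leq n\Delta/2$, we have $\log^\ast |V(L(G))| = O(\log^\ast n)$ when $\Delta$ is treated as a fixed parameter.

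For part (i), I would proceed in two phases. The first phase applies Linial's classical coloring algorithm to $L(G)$ to produce an initial proper vertex-coloring using $O(d^2) = O(\Delta^2)$ colors in $O(\log^\ast n)$ rounds. The second phase iteratively shrinks the palette down to $d+1$ colors using $O(\Delta)$ additional rounds. A naive iterative recoloring scheme yields $O(\Delta^2)$ rounds; the Kuhn--Wattenhofer block-merging idea improves this to $O(\Delta \log \Delta)$ by halving the palette in $O(\Delta)$-round phases; and the Panconesi--Rizzi refinement compresses this further to $O(\Delta)$ by exploiting the combinatorial structure specific to line graphs, for instance via $H$-partitions and carefully scheduled parallel partial edge-colorings.

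For part (ii), the strategy is a lifting argument from Linial's classical lower bound for highly symmetric regular graphs. One constructs a family of $\Delta$-regular graphs with girth $\Omega(\log n / \log \Delta)$---for example, via Ramanujan-type expander constructions or a random construction with alteration---and observes that if a deterministic algorithm $A$ terminates in $T = o(\log n/\log \Delta)$ rounds, then every vertex's $T$-neighborhood is isomorphic to a truncated $\Delta$-regular tree. A neighborhood-indistinguishability argument then encodes a forbidden symmetry-breaking task, analogous to Linial's $\Omega(\log^\ast n)$ bound for $3$-coloring cycles but amplified by the girth, to derive a contradiction with the existence of a $(2\Delta-2)$-edge-coloring on the constructed graph.

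The main obstacle on both sides is obtaining the correct dependence on $\Delta$. For the upper bound, shaving the $\log \Delta$ factor in the palette-reduction phase to achieve the linear $O(\Delta)$ rate requires edge-coloring-specific combinatorics rather than a black-box reduction to vertex coloring---and this is the technically hardest step of the Panconesi--Rizzi argument. For the lower bound, the sharp $\log n / \log \Delta$ scaling (as opposed to the easier $\log^\ast n$ bound at fixed $\Delta$) demands a precise extremal construction matching the girth, regularity, and size of the graph, together with a delicate neighborhood-isomorphism analysis; this is the heart of the Chang--He--Li--Pettie--Uitto argument.
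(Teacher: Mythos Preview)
The paper does not contain a proof of this theorem: it is quoted as a known result, with part~(i) attributed to Panconesi--Rizzi (via Barenboim--Elkin) and part~(ii) to Chang--He--Li--Pettie--Uitto. There is therefore nothing in the paper to compare your proposal against.

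As a sketch of the cited proofs, your outline of part~(i) is broadly accurate: the reduction to $(\Delta(L(G))+1)$-vertex-coloring of the line graph followed by Linial-type color reduction and the Panconesi--Rizzi $O(\Delta)$ refinement is indeed the standard route. Your account of part~(ii), however, is too vague to constitute a proof plan. A girth-plus-indistinguishability argument of the Linial type naturally yields $\Omega(\log^\ast n)$-style bounds, not $\Omega(\log n/\log\Delta)$; the Chang--He--Li--Pettie--Uitto lower bound instead proceeds via a round-elimination/speedup argument tailored to the sinkless-orientation and edge-coloring problem families, and the ``amplified by the girth'' step you allude to is not how the logarithmic dependence on $n$ actually arises. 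If you intend to reconstruct the lower bound rather than cite it, you would need to replace that paragraph with the actual speedup machinery.
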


    In the above statement, $\log^\ast n$ is the \emphd{iterated logarithm} of $n$, i.e., the number of times the logarithm function must be applied to $n$ before the result becomes at most $1$. This is an extremely slow-growing function, asymptotically much smaller than $\log n$ or any finite iterate of $\log n$. Our second main result is that in $O(\log^\ast n)$ rounds, it is possible to find a proper $(\Delta+1)$-edge-coloring of an $n$-vertex graph $G$ with maximum degree $\Delta$ when $G$ belongs to a class of graphs with subexponential growth. More precisely, given $\Delta \in \N$ and a function $f \colon \N \to \N$, we let $\mathbb{G}_{\Delta, f}$ be the class of all finite graphs $G$ of maximum degree at most $\Delta$ and growth bounded by $f$. %such that
    %\[
    %    \forall v \in V(G),\, \forall R \in \N, \  |N_G^R[v]| \leq f(R).
    %\]

    \begin{tcolorbox}
    \begin{theo}[\LOCAL algorithm for Vizing's theorem on subexponential growth graphs]\label{theo:dist_algo}
        Fix a subexponential function $f \colon \N \to \N$ and $\Delta \in \N$. There is a deterministic \LOCAL algorithm that, given an $n$-vertex graph $G \in \mathbb{G}_{\Delta,f}$, finds a proper $(\Delta+1)$-edge-coloring of $G$ in at most $C_f \log^\ast n$ rounds, where $C_f > 0$ depends only on the function $f$.
    \end{theo}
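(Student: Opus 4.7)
The plan is to combine three ingredients: the fast $(2\Delta-1)$-edge-coloring guaranteed by Theorem~\ref{theo:2d-1_alg}\ref{item:2d-1_alg}, an iterative ``surplus color removal'' procedure based on Vizing chains, and the subexponential growth hypothesis to keep those chains short. Since $\Delta$ is a fixed constant, the initial $(2\Delta-1)$-edge-coloring is computed in $O(\Delta + \log^\ast n) = O_\Delta(\log^\ast n)$ rounds, so our task reduces to reducing the palette from $2\Delta-1$ to $\Delta+1$ in $O_{f,\Delta}(\log^\ast n)$ additional rounds.

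I would remove surplus colors one at a time. Suppose a proper $k$-edge-coloring $\phi$ is in hand with $k\geq \Delta+2$; let $M \subseteq E(G)$ be the matching consisting of all edges colored $k-1$. After uncoloring $M$, the goal is to re-insert each $e \in M$ into the coloring using only colors from $\{0,1,\dots,k-2\}$ via a standard Vizing chain: rotate a fan at one endpoint of $e$, then shift a bichromatic Kempe chain to free up a repeated color. To parallelize across edges of $M$, I would first compute, in $O(\log^\ast n)$ rounds via Linial's algorithm applied to a constant power of $G$, a proper coloring of $M$ (viewed as a distance-$(2L+1)$ conflict graph) with $O_{f,\Delta}(1)$ color classes, where $L = L(f,\Delta)$ is a uniform bound on the radius of influence of each Vizing chain. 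Then process one conflict class at a time; edges in the same class sit in pairwise disjoint $L$-balls, so their chain augmentations do not interfere. The whole color-reduction loop runs for $\Delta - 2$ iterations, which is a constant.

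The main obstacle, and the technical heart of the argument, is to establish the existence of such a constant $L = L(f,\Delta)$: for every $G \in \mathbb{G}_{\Delta,f}$, every proper partial $(\Delta+1)$-edge-coloring of $G$, and every uncolored edge $e$, a valid Vizing chain augmenting $e$ can be chosen entirely inside $N_G^L[e]$. This uniform bound is simply false without a growth hypothesis: in general bounded-degree graphs the fan-plus-Kempe-chain construction can wander along paths of length $\Omega(|V(G)|)$. The subexponential growth condition is precisely what breaks this worst case. I would argue by pigeonhole on configurations: each step of the Vizing chain construction is determined by the ``local type'' (the colored neighborhood of the current endpoint up to some fixed depth), of which only $C(\Delta)$ many exist. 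If the chain had to traverse more than $L$ steps, a suitably-chosen configuration would repeat within $N_G^L[e]$, and one can short-circuit the chain at the repetition. Choosing $\epsilon$ in~\eqref{eq:Reps} small enough that $f(L) < \exp(\epsilon L)$ is dominated by the combinatorial budget $C(\Delta)^{o(L)}$ required to avoid a repetition, we extract the desired uniform bound $L = L(f,\Delta)$.

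Once $L$ is fixed, the rest is a bookkeeping assembly: the algorithm collects the radius-$L$ neighborhood of each edge in $O_{f,\Delta}(1)$ rounds, runs the Vizing chain construction locally, and applies the changes, repeating across the $O_{f,\Delta}(1)$ conflict color classes and the $\Delta - 2$ palette-reduction rounds. The Borel theorem~\ref{theo:main_theorem} will then follow from Theorem~\ref{theo:dist_algo} via the general transfer principle of~\cite{BernshteynDistributed} relating $O(\log^\ast n)$-round deterministic \LOCAL algorithms to Borel constructions.
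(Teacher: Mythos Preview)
Your outer scaffolding (Linial-style conflict coloring, then process color classes in $O_{f,\Delta}(1)$ many rounds) is exactly right, and whether you start from a $(2\Delta-1)$-coloring and peel off colors or start from the empty coloring and build up is immaterial. The genuine gap is in the heart of the argument: the claimed uniform bound $L = L(f,\Delta)$ on the length of a \emph{single-step} Vizing chain (fan plus one bichromatic Kempe path) does not follow from the pigeonhole sketch you give. A bichromatic $\alpha\beta$-path is a \emph{forced} walk---at each vertex there is a unique $\alpha$-edge and a unique $\beta$-edge---so there is no choice point at which a ``repeated local type'' lets you short-circuit; you cannot cut out a segment of the path and still have a valid augmenting subgraph. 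Concretely, even in the $n\times n$ grid (polynomial, hence subexponential, growth) a bichromatic path in a proper edge-coloring can have length $\Theta(n)$, and the standard Vizing chain for an uncolored edge can be forced onto such a path. Your inequality ``$f(L)<\exp(\epsilon L)$ is dominated by $C(\Delta)^{o(L)}$'' does not yield a bound on $L$: it is the wrong direction for a pigeonhole and in any case the walk has no branching to pigeonhole over.

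What the paper does instead is invoke Christiansen's theorem \cite[Theorem~3]{Christ}, which guarantees (via \emph{multi-step} Vizing chains) an $e$-augmenting subgraph with at most $C\Delta^7\log n$ edges in any $n$-vertex graph. The key trick is to apply this not to $G$ but to the induced subgraph $H = G[N_G^R[x]\cup N_G^R[y]]$ for a carefully chosen constant $R = R(f,\Delta)$: subexponential growth gives $|V(H)| \leq 2f(R) < \exp(2\epsilon R)$, so Christiansen's bound on $H$ yields an augmenting subgraph with at most $C\Delta^7\cdot 2\epsilon R \leq R-1$ edges, which therefore lies entirely inside $H$ and is still augmenting for the original coloring on $G$. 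This self-referential use of the ball---choosing $R$ so that $C\Delta^7 \log f(R) < R$, which is exactly where subexponentiality is used---replaces your pigeonhole step and gives the constant $L$ you need. With this lemma in hand, your scaffolding (or the paper's nearly identical one) finishes the proof.
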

    \end{tcolorbox}

    Theorem~\ref{theo:main_theorem} is an immediate consequence of Theorem~\ref{theo:dist_algo}, thanks to \cite[Theorem~2.10]{BernshteynDistributed} (in fact, Theorem~\ref{theo:dist_algo} is stronger than what is needed, since, by \cite[Theorem~2.15]{BernshteynDistributed}, having a \emph{randomized} \LOCAL algorithm with the parameters stated in Theorem~\ref{theo:dist_algo} would already suffice to deduce Theorem~\ref{theo:main_theorem}). We explain the details of the derivation of Theorem~\ref{theo:main_theorem} from Theorem~\ref{theo:dist_algo} in \S\ref{sec:implication}. The proof of Theorem~\ref{theo:dist_algo} is presented in \S\ref{sec:algorithm}. The main tool we rely on is a recent result of Christiansen \cite[Theorem 3]{Christ} on small augmenting subgraphs for proper partial $(\Delta+1)$-edge-colorings.

    \section{From Theorem \ref{theo:dist_algo} to Theorem \ref{theo:main_theorem}}\label{sec:implication}

    As mentioned in the introduction, to derive Theorem~\ref{theo:main_theorem} from Theorem~\ref{theo:dist_algo}, we invoke \cite[Theorem 2.10]{BernshteynDistributed}. Roughly speaking, \cite[Theorem 2.10]{BernshteynDistributed} says that if a Borel graph $G$ can be ``approximated'' by finite graphs from a certain class $\mathbb{G}$, then any locally checkable labeling problem that can be solved by an efficient deterministic \LOCAL algorithm on the graphs in $\mathbb{G}$ admits a Borel solution on $G$. %class of finite graphs, then it admits Borel solutions on   We will use a result of  which relates distributed algorithms for local problems to their Borel counterparts.
    The precise statement of \cite[Theorem 2.10]{BernshteynDistributed} is somewhat technical, but since we do not need it in full generality, we shall only state the special case for edge-colorings.  %its entirety, so we present a corollary that captures the relevant result.

\begin{theo}[{AB \cite[Theorem 2.10]{BernshteynDistributed}}]\label{theo:dist_algo_to_borel}
    Fix $q \in \N$ and a class of finite graphs $\mathbb{G}$. Suppose that there exists a deterministic \LOCAL algorithm that, given an $n$-vertex graph $H \in \mathbb{G}$, finds a proper $q$-edge-coloring of $H$ in $o(\log n)$ rounds.\footnote{The implicit constants in the $o(\cdot)$ notation may depend on $q$ and the class $\mathbb{G}$.} If $G$ is a Borel graph of finite maximum degree $\Delta$ all of whose finite induced subgraphs are in $\mathbb{G}$, then $\chi'_\mathsf{B}(G) \leq q$.
\end{theo}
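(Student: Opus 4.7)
The plan is to simulate the hypothesized \LOCAL algorithm on $G$ in a Borel way, using a Borel proper coloring of a bounded-radius power of the line graph to play the role of the identifier assignment. First, I pass to the line graph $L(G)$, which is itself a Borel graph of maximum degree at most $2(\Delta-1)$, and recast the problem as that of finding a Borel proper $q$-vertex-coloring of $L(G)$. For any fixed $R \in \N$, the power $L(G)^R$ remains a bounded-degree Borel graph, so an iterated Borel maximal independent set construction produces a Borel proper coloring $\lambda \colon E(G) \to \set{1,\ldots,M(R)}$ with $M(R) \leq (2\Delta-1)^{O(R)}$, and $\lambda$ is injective on every radius-$R$ ball of $L(G)$.

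Let $T(n)$ denote the round complexity of the given algorithm $\mathcal{A}$ on $n$-vertex graphs from $\mathbb{G}$. Since $T(n) = o(\log n)$ while $M(R) = 2^{\Theta(R)}$, I can choose $R$ large enough that, for $n \defeq M(R)$, the inequality $T(n) \leq R/2$ holds. For each edge $e \in E(G)$, consider the $R$-ball $B_e$ around $e$ in $L(G)$; the set of vertices of $G$ incident to the edges of $B_e$ spans a finite induced subgraph of $G$, which by hypothesis lies in $\mathbb{G}$. After a canonical enlargement \ep{e.g.\ by adjoining vertices from a slightly larger ball and padding to size $n$ by a fixed rule}, one obtains a graph $H_e \in \mathbb{G}$ on $n$ vertices within which $B_e$ is still isomorphic to the corresponding radius-$R$ ball of $L(G)$ about $e$. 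Because $\lambda$ is injective on $B_e$, it can be converted canonically \ep{depending only on the labeled isomorphism type of the ball} into an identifier map $\mathsf{ID}\colon V(H_e) \to \set{1,\ldots,n}$. Running $\mathcal{A}$ on $(H_e,\mathsf{ID})$ for $T(n) \leq R/2$ rounds and reading off the color assigned to $e$ yields the candidate color $\phi(e)$.

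Properness of $\phi$ follows from the locality of $\mathcal{A}$. If $e$ and $e'$ share an endpoint in $G$, then $e' \in B_e$, and by the canonicity of the enlargement the labeled $T(n)$-neighborhood of $e'$ inside $H_e$ agrees with that of $e'$ inside $H_{e'}$; hence $\mathcal{A}$ outputs the same color for $e'$ in both simulations. Since $\mathcal{A}$ produces a proper edge-coloring of $H_e$, the colors of $e$ and $e'$ computed inside $H_e$ differ, so $\phi(e) \neq \phi(e')$. Borel measurability of $\phi$ is immediate from the Borel definability of $\lambda$, of the map $e \mapsto (H_e, \mathsf{ID})$, and of the fixed algorithm $\mathcal{A}$.

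The main obstacle will be formalizing the canonicity of the padding and identifier assignment: one must fix a rule for extending $B_e$ to a size-$n$ graph in $\mathbb{G}$ and for converting $M(R)$-many Borel labels into genuine identifiers from $\set{1,\ldots,n}$ that depends only on the labeled isomorphism type of $B_e$. Once this is arranged, the independent simulations run on $H_e$ and $H_{e'}$ agree on their overlap, and the remaining steps are routine translations between local views and global identifiers; the hypothesis $T(n) = o(\log n)$ is used precisely to guarantee that a constant-depth local view admits enough room ($n \gg M(R)$) for a Borel ``identifier pool'' to fit.
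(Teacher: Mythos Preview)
This theorem is not proved in the paper; it is quoted from \cite[Theorem~2.10]{BernshteynDistributed} and used as a black box. Your sketch follows essentially the argument given there: produce a Borel labeling that is injective on balls of a suitable radius $R$, use it as a pool of identifiers, and simulate the $T(n)$-round algorithm locally once $R$ is chosen so that $T(M(R)) \leq R/2$.

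Two points in your write-up warrant care. First, $\mathcal{A}$ expects identifiers on \emph{vertices} of graphs in $\mathbb{G}$, whereas your $\lambda$ labels \emph{edges} (vertices of $L(G)$); the promised ``canonical conversion'' from an injective edge-labeling of $B_e$ to an injective vertex identifier map on $H_e$ is not automatic. There is no need for this detour through the line graph: take $\lambda$ to be a Borel proper coloring of the vertex power $G^{2R}$ instead, which is equally available and yields vertex labels directly. Second, your ``padding to size $n$'' must keep $H_e$ inside $\mathbb{G}$, and the only operation the hypothesis licenses is taking an $n$-vertex \emph{induced subgraph of $G$} containing the relevant ball; arbitrary padding (isolated vertices, etc.) need not land in $\mathbb{G}$. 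The case where $G$ has fewer than $n$ vertices is then handled separately (and trivially). Finally, note that you do not actually need $H_e$ to be chosen canonically: the output $\phi(e)$ depends only on the $\lambda$-labeled $T(n)$-ball around $e$, so Borelness of $\phi$ follows directly from Borelness of $\lambda$; the graph $H_e$ serves only to witness, for each adjacent pair $e,e'$, a single $n$-vertex member of $\mathbb{G}$ with a valid identifier assignment on which $\mathcal{A}$ simultaneously computes both $\phi(e)$ and $\phi(e')$.
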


    The actual statement of \cite[Theorem 2.10]{BernshteynDistributed} is numerically explicit (i.e., it does not use the asymptotic notation $o(\cdot)$) and does not require all finite induced subgraphs of $G$ to be in $\mathbb{G}$, but the simplified formulation given above will be sufficient for our purposes. See \cite[\S2.B.1]{BernshteynDistributed} for further discussion.

    Assume Theorem~\ref{theo:dist_algo} and let $G$ be a Borel graph of subexponential growth and of finite maximum degree $\Delta$. Let $f \colon \N \to \N$ be a subexponential function that bounds the growth of $G$. %such that
    %\[
    %    \forall v \in V(G),\, \forall R \in \N, \  |N_G^R[v]| \leq f(R).
    %\]
    Theorem \ref{theo:dist_algo}  yields a constant $C_f > 0$ and a deterministic \LOCAL algorithm for $(\Delta+1)$-edge-coloring $n$-vertex graphs in $\mathbb{G}_{\Delta,f}$ in $C_f\log^\ast n = o(\log n)$ rounds. Since all finite induced subgraphs of $G$ belong to $\mathbb{G}_{f,\Delta}$, we have $\chi'_\mathsf{B}(G) \leq \Delta + 1$ by Theorem~\ref{theo:dist_algo_to_borel}. This finishes the proof of Theorem~\ref{theo:main_theorem}.
    
    \section{Proof of Theorem \ref{theo:dist_algo}}\label{sec:algorithm}

% Such a coloring is proper if $\phi(e) \neq \phi(f)$ whenever $e, f$ are adjacent.The starting point for our proof of Theorem~\ref{theo:dist_algo} is the notion of an augmenting subgraph:

        \begin{defn}[Augmenting subgraphs for partial colorings]\label{defn:aug}
            Let $G$ be a graph of finite maximum degree $\Delta$ and let $\phi \colon E(G) \pto (\Delta + 1)$ be a proper partial $(\Delta + 1)$-edge-coloring with domain $\dom(\phi) \subset E(G)$. A subgraph $H \subseteq G$ is \emphd{$e$-augmenting} for an uncolored edge $e \in E(G) \setminus \dom(\phi)$ if $e \in E(H)$ and there is a proper coloring $\phi' \colon \dom(\phi) \cup \set{e} \to (\Delta + 1)$ that agrees with $\phi$ on the edges in $E(G) \setminus E(H)$; in other words, by modifying the colors of the edges of $H$, it is possible to add $e$ to the set of colored edges. We refer to such a modification operation as \emphd{augmenting} $\phi$ using $H$.
        \end{defn}

        Note that every inclusion-minimal $e$-augmenting subgraph is connected. We will use the following recent result of Christiansen:

\begin{theo}[{Christiansen \cite[Theorem 3]{Christ}}]\label{theo:small_aug}
    There is a constant $C \geq 1$ such that if $G$ is an $n$-vertex graph of maximum degree $\Delta$ and $\phi$ is a proper partial $(\Delta + 1)$-edge-coloring of $G$, then for each uncolored edge $e$, there is an %connected
    $e$-augmenting subgraph with at most $C\Delta^{7}\log n$ edges. %for some constant $C \geq 1$.
\end{theo}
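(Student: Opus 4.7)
The plan is to prove the theorem via the method of \emph{multi-step Vizing chains}, which generalizes the classical Vizing chain argument used to prove Theorem~\ref{theo:Vizing}. Recall that the classical proof starts from the uncolored edge $e = u_0v_0$, builds a \emphd{Vizing fan} $F$ at $v_0$ (a maximal sequence of edges $v_0w_1, v_0w_2, \ldots$ such that the color of $v_0w_{i+1}$ is missing at $w_i$), and either succeeds in ``rotating'' $F$ to augment $\phi$, or else ends with a pair of colors $\alpha, \beta$ for which one must flip the \emphd{Kempe chain} (maximal $\alpha/\beta$-alternating path) starting at the last fan vertex. The fan has $O(\Delta)$ edges, but the Kempe chain can contain $\Omega(n)$ edges, which is far too many.

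First I would truncate the Kempe chain at some carefully chosen length $\ell = \mathrm{poly}(\Delta)$. If the chain terminates within $\ell$ edges, flipping it completes the augmentation and we are done. Otherwise, one obtains a new ``frontier'' vertex $u_1$ where the roles of $\alpha$ and $\beta$ locally look like a missing color; at $u_1$ one restarts the whole procedure, building a fresh Vizing fan $F_1$, producing a new truncated Kempe chain, and so on. This yields a sequence $F_0, P_0, F_1, P_1, \ldots, F_k, P_k$ of fans and truncated chains, each contributing $O(\Delta + \ell) = \mathrm{poly}(\Delta)$ edges. The overall object is an $e$-augmenting subgraph whose edge count is $\mathrm{poly}(\Delta) \cdot k$, so the job reduces to bounding $k = O(\log n)$.

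The central step is the termination argument. I would introduce randomness in the construction: at each stage, when choosing which missing color to use at $u_i$, which vertex of the fan to pivot at, and where to truncate the Kempe chain within a prescribed window of length $\Theta(\ell)$, I would make these choices uniformly at random. A well-designed chain ``fails'' at step $i$ only if the freshly constructed fragment $F_i \cup P_i$ intersects the previously built fragments $F_0 \cup P_0 \cup \cdots \cup F_{i-1} \cup P_{i-1}$, because otherwise the chain can be safely terminated and the augmentation carried out. Since the previous fragments have total size $\mathrm{poly}(\Delta) \cdot i$, and the random truncation position is spread across a window of size $\Theta(\ell)$, the probability of failure at step $i$ can be bounded by something like $\mathrm{poly}(\Delta) \cdot i / \ell$. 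Taking $\ell$ to be a sufficiently high power of $\Delta$ (this is where the $\Delta^7$ exponent originates from balancing the various parameters), after $k = O(\log n)$ iterations the total failure probability is $< 1$, so some successful chain of length $O(\log n)$ must exist, and its total size is $C\Delta^7 \log n$.

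The main obstacle is the probabilistic accounting in the termination argument: one needs to set up the random choices so that different stages are essentially independent, and one must carefully quantify the intersection probability between the newly grown fragment and the already built subgraph. A subtle point is that the fan construction at $u_i$ is itself an adaptive process depending on $\phi$ near $u_i$, so the ``random fan'' is not a uniform object; one has to identify an auxiliary random bit (typically the choice among the $\geq 2$ missing colors at $u_i$ available after the fan is built) whose conditional distribution remains essentially uniform over a set of size $\mathrm{poly}(\Delta)$. Balancing these parameters to obtain the specific exponent $7$ is the most delicate bookkeeping, but the qualitative $O(\mathrm{poly}(\Delta) \cdot \log n)$ bound follows from the above scheme.
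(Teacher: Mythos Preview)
The paper does not prove Theorem~\ref{theo:small_aug}; it is quoted verbatim from Christiansen's paper and used only as a black box in the proof of Lemma~\ref{lemma:small_aug_subexp}. So there is no ``paper's own proof'' to compare against.

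That said, your sketch is aimed at the right target: multi\-/step Vizing chains (fan $+$ truncated Kempe chain, restarted at the truncation vertex, with random choices) is exactly the framework behind Christiansen's result and the earlier $\poly(\Delta)\log^2 n$ bound in \cite{VizingChain}. Where your outline has a genuine gap is the termination analysis. You write that the failure probability at step $i$ is ``something like $\poly(\Delta)\cdot i/\ell$'' and then take $\ell = \poly(\Delta)$ independent of $n$. With those parameters, the bound becomes vacuous once $i$ exceeds a $\poly(\Delta)$ threshold, so no union bound over $k = O(\log n)$ steps can close. The actual argument must control intersections more delicately: one exploits that a new alternating path can meet each previously built fragment in only $O(1)$ places, so the number of ``bad'' truncation positions grows like the number of previous steps rather than their total size, and one couples this with a careful accounting (in Christiansen's case, a potential/charging argument rather than a naive union bound) to push $k$ down to $O(\log n)$. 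Getting $\log n$ rather than $\log^2 n$ is precisely the content of \cite{Christ}, and your sketch as written does not yet isolate the idea that achieves it.
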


    We remark that the bound $C\Delta^{7}\log n$ in Theorem~\ref{theo:small_aug} is best possible as far as the dependence on $n$ is concerned, since Chang, He, Li, Pettie, and Uitto \cite{CHLPU} showed that, in general, given an uncolored edge $e$, there may not exist an $e$-augmenting subgraph of diameter less than $\Omega(\Delta \log(n/\Delta))$. Christiansen's work followed a sequence of earlier, weaker bounds: $O(n)$ by Vizing \cite{Vizing} (see also \cites[94]{Bollobas}{RD}{MG}), $\poly(\Delta)\sqrt{n}$ by Greb\'ik and Pikhurko \cite{GP} (this bound is not explicitly stated in \cite{GP}; see \cite[\S3]{VizingChain} for a sketch of the derivation of the $\poly(\Delta)\sqrt{n}$ bound using the Greb\'ik--Pikhurko approach), and $\poly(\Delta) \log^2 n$ by the first named author \cite{VizingChain}. In \cite{BDVizing}, the authors gave an alternative proof of the bound $\poly(\Delta)\log n$ and an efficient algorithm to find a small augmenting subgraph.

    Now we are ready to start the proof of Theorem~\ref{theo:dist_algo}. For the remainder of this section, fix a subexponential function $f \colon \N \to \N$, integers $\Delta$, $n \in \N$, and an $n$-vertex graph $G \in \mathbb{G}_{f, \Delta}$. Since the maximum degree of $G$ cannot exceed $f(1)$, we may assume that $\Delta \leq f(1)$. We may also assume that $\Delta \geq 3$, since otherwise we are done by Theorem~\ref{theo:2d-1_alg}\ref{item:2d-1_alg}.

    Define $\epsilon \defeq 1/(3Cf(1)^7)$ and $R \defeq \max\set{R(\epsilon), \,\lceil\epsilon^{-1}\rceil,\, 3}$, where $C$ is the constant from Theorem~\ref{theo:small_aug} and $R(\epsilon)$ is given by \eqref{eq:Reps}. %defined in Definition \ref{defn:subexp}.
    The key observation is that in $G$, we can find augmenting subgraphs whose number of edges is \emph{independent of $n$}: %Let us first show we can find small augmenting subgraphs in $G$.

\begin{Lemma}\label{lemma:small_aug_subexp}
    Let $\phi$ be a proper partial $(\Delta + 1)$-edge-coloring of $G$.
    Then for each uncolored edge $e = xy$, there exists an $e$-augmenting subgraph with at most $R$ edges.
\end{Lemma}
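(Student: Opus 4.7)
The plan is to apply Christiansen's theorem (Theorem~\ref{theo:small_aug}) locally---to the induced subgraph $G^R \defeq G[N_G^R[x]]$ instead of to all of $G$---and to lift the resulting augmenting subgraph back to $G$. Subexponential growth is the key ingredient: the ball $N_G^R[x]$ contains at most $f(R)$ vertices, so Christiansen's $\log n$ bound becomes a small fraction of $R$, which forces the augmenting subgraph to stay strictly inside the ball and makes the lift automatic.

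Concretely, I would apply Theorem~\ref{theo:small_aug} to $G^R$ (which has at most $f(R)$ vertices and maximum degree at most $\Delta$) equipped with the partial coloring $\phi|_{E(G^R)}$, obtaining an $e$-augmenting subgraph $H'$ for $(G^R, \phi|_{E(G^R)})$ with $|E(H')| \leq C\Delta^7 \log f(R)$. Since $R \geq R(\epsilon)$, subexponential growth yields $\log f(R) < \epsilon R$, and then $\Delta \leq f(1)$ together with $\epsilon = 1/(3Cf(1)^7)$ gives $|E(H')| < R/3$. Replacing $H'$ by an inclusion-minimal $e$-augmenting subgraph $H \subseteq H'$, I would next argue $H$ is connected by a standard component argument: the components of $H$ are vertex-disjoint, so the component of $H$ containing $e$ is already $e$-augmenting by itself (its recoloring does not interact with the other components), forcing minimality. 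Since $H$ is connected, contains $x$, and has fewer than $R/3$ edges, every vertex of $H$ lies within distance $R/3$ of $x$, so $V(H) \subseteq N_G^{R-1}[x]$ (using $R \geq 3$).

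Finally, I would verify that $H$ is $e$-augmenting for $(G, \phi)$ itself. Let $\phi'$ be the proper coloring of $\dom(\phi|_{E(G^R)}) \cup \{e\}$ obtained by augmenting $\phi|_{E(G^R)}$ via $H$, and extend it to $\phi^*$ on $\dom(\phi) \cup \{e\}$ by setting $\phi^* \defeq \phi$ on every edge with at least one endpoint outside $N_G^R[x]$. Since $V(H) \subseteq N_G^{R-1}[x]$, every vertex at distance exactly $R$ from $x$ has no incident edge in $H$, so its edges retain their $\phi$-colors and no conflict arises there; every vertex in $N_G^{R-1}[x]$ has all its incident edges inside $G^R$, where propriety of $\phi^* = \phi'$ is guaranteed by the augmentation on $G^R$. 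Hence $\phi^*$ is a proper extension of $\phi$ by $\{e\}$ agreeing with $\phi$ off $E(H)$, so $H$ is an $e$-augmenting subgraph for $(G, \phi)$ with at most $R$ edges. The only genuinely delicate point here is this lifting step: an $e$-augmenting subgraph for the induced subgraph $G^R$ may in general clash with external edges at boundary vertices when viewed in $G$, and it is precisely subexponential growth that rules this out by pushing $H$ strictly into the interior of the ball.
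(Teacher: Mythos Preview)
Your proposal is correct and follows essentially the same approach as the paper: apply Christiansen's theorem to the induced subgraph on a ball of radius $R$, use subexponential growth to bound the size of the resulting augmenting subgraph below $R$, take it inclusion-minimal (hence connected) so that it sits strictly inside the ball, and then verify that it is still $e$-augmenting in $G$ because every edge incident to a vertex of $H$ already lies in the local subgraph. The only cosmetic difference is that the paper takes the local graph to be $G[N_G^R[x]\cup N_G^R[y]]$ rather than $G[N_G^R[x]]$; your choice is marginally cleaner and yields the slightly sharper bound $|E(H)| < R/3$ instead of $2R/3$, but this has no effect on the argument.
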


\begin{proof}
    Let $H \defeq G\big[N_G^R[x] \cup N_G^R[y]\big]$. By the definition of $R$, we have \[|V(H)| \,\leq\, 2f(R) \,<\, 2\exp(\epsilon R) \,<\, \exp(2\epsilon R),\] where in the last inequality we use that $\epsilon R \geq 1$. Consider the partial edge-coloring $\psi \defeq \rest{\phi}{E(H)}$ of $H$. By Theorem \ref{theo:small_aug}, there is a subgraph $H'$ of $H$ such that $H'$ is $e$-augmenting for $\psi$ and
    \[
        |E(H')| \,\leq\, C\Delta^{7} \log |V(H)| \,\leq\, C\Delta^{7}\, 2\epsilon R \,\leq\, 2C(f(1))^{7}\,\epsilon R \,=\, \frac{2R}{3} \,\leq\, R - 1.\]
    %where we use that $R \geq 3$.
    We may choose $H'$ to be inclusion-minimal, hence connected, and then it follows that every vertex of $H'$ is reachable from $x$ by a path of at most $R-1$ edges. Let $\psi' \colon \dom(\psi) \cup \set{e} \to (\Delta + 1)$ be 
    a proper coloring obtained by augmenting $\psi$ using $H'$
    % a proper coloring that agrees with $\psi$ on the edges in $E(H)\setminus E(H')$ 
    and define $\phi' \colon \dom(\phi) \cup \set{e} \to (\Delta + 1)$ by
    \[
        \phi'(h) \,\defeq\, \begin{cases}
            \psi'(h) &\text{if } h \in E(H);\\
            \phi(h) &\text{if } h \in E(G) \setminus E(H).
        \end{cases}
    \]
    We claim that $\phi'$ is proper. Otherwise, since both $\psi'$ and $\phi$ are proper and $\psi'$ agrees with $\phi$ on the edges in $E(H) \setminus E(H')$, there are vertices $u$, $v$, $w \in V(G)$  such that $uv \in E(H')$ and $vw \in E(G) \setminus E(H)$. This implies that $v$ is reachable from $x$ by a path of length at most $R - 1$, so both $v$ and $w$ belong to $N_G^R[x]$, and hence $vw \in E(H)$; a contradiction. %and $\phi(vw) = \psi'(uv)$
    %\[\forall v \in V(H'),\  \min\set{d(x, v), d(y, v)} \leq R-1.\]
    Since $\phi$ and $\phi'$ only differ on the edges of $H'$, it follows that, viewed as a subgraph of $G$, $H'$ is  $e$-augmenting for $\phi$, and we are done. 
    %If not, there are some vertices $u$, $v$, $w \in V(G)$  such that $uv \in E(H')$, $vw \in E(H', G-H)$ and $\phi(vw) = \aug(\phi, H')(uv)$ \todo{notation}.
    %Consider such vertices $u, v, w$.
    %We have 
    %\[\min\set{d(x, w), d(y, w)} \leq \min\set{d(x, v) + 1, d(y, v) + 1} \leq R,\]
    %implying $w \in V(H)$.
    %This contradicts the fact that $H'$ is $e$-augmenting in $H$ and so no such edges can exist.
    %Therefore $H'$ is augmenting in $G$, completing the proof.
\end{proof}

    %Let $e, f \in E(G)$ such that $d(e, f) > 2R$. 
    %As a result of the previous lemma, we can simultaneously augment an $e$-augmenting and $f$-augmenting subgraph.
    %The goal is to now partition the edges such that we can iteratively color each partition.
    
    Recall that a \emphd{proper $q$-vertex-coloring} of a graph $G$ is a mapping $\phi \colon V(G) \to q$ such that $\phi(u) \neq \phi(v)$ for all $uv \in E(G)$. We shall use a classical result of Goldberg, Plotkin, and Shannon: %Linial \todo{cite}: %will assist with our proof.

\begin{theo}[{Goldberg--Plotkin--Shannon \cites{GPSh}[Corollary 3.15]{BE}}]\label{theo:dist_linial}
    There exists a deterministic \LOCAL algorithm that, given an $n$-vertex graph $G$ of maximum degree $\Delta$, computes a proper $(\Delta+1)$-vertex-coloring of $G$ in $O(\Delta^2) + \log^\ast n$ rounds.
\end{theo}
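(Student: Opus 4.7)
The plan is to combine two classical ingredients: Linial's one-round color reduction via cover-free set systems, which drives an initial $n$-coloring down to $O(\Delta^2)$ colors in $O(\log^\ast n)$ rounds, followed by a trivial ``shift-down'' procedure that removes excess colors one at a time in $O(\Delta^2)$ additional rounds. I would initialize with the proper vertex-coloring $v \mapsto \mathsf{ID}(v) \in \set{1,\ldots,n}$, which is trivially proper since distinct vertices have distinct identifiers.

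The main ingredient is Linial's one-round reduction. Call a family $\set{S_1, \ldots, S_k}$ of subsets of $\set{1, \ldots, m}$ \emph{$\Delta$-cover-free} if no $S_i$ is contained in the union of any $\Delta$ of the remaining sets. A standard probabilistic argument (place each element of $\set{1, \ldots, m}$ into each $S_i$ independently with probability $\Theta(1/\Delta)$, then apply a union bound) or an explicit polynomial-based construction yields such a family with $m = O(\Delta^2 \log k)$. Given a proper $k$-vertex-coloring $\phi$, each vertex $v$ learns the colors of its at most $\Delta$ neighbors in one round and then chooses some \[\phi'(v) \,\in\, S_{\phi(v)} \setminus \bigcup_{u \,:\, uv \in E(G)} S_{\phi(u)},\] which is non-empty by the cover-free property. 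The resulting $\phi'$ is a proper $m$-coloring.

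Iterating this reduction starting from $n$ colors, the number of colors evolves according to $k_{i+1} = O(\Delta^2 \log k_i)$. For fixed $\Delta$, each step takes a logarithm of the previous value, so after $O(\log^\ast n)$ iterations the number of colors stabilizes at a constant $k^\ast = O(\Delta^2)$ depending only on $\Delta$.

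To descend from $k^\ast$ colors down to $\Delta + 1$, I would apply the following shift-down step repeatedly. Given a proper $q$-coloring with $q > \Delta + 1$, the set of vertices of color $q-1$ is independent, and each such vertex has at most $\Delta$ neighbors, so at least one color in $\set{0,1,\ldots,\Delta}$ is unused among its neighbors. All such vertices simultaneously recolor themselves with, say, the smallest free color in that range; there are no conflicts because the recoloring set is independent, so the result is a proper $(q-1)$-coloring obtained in one round. Applying this at most $k^\ast - (\Delta + 1) = O(\Delta^2)$ times yields a proper $(\Delta + 1)$-coloring. The total round count is $O(\Delta^2) + O(\log^\ast n)$. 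The only conceptually nontrivial step is the existence of sufficiently small $\Delta$-cover-free families; the shift-down phase is immediate from the degree bound, and the overall recursion on $k_i$ is a routine iterated-logarithm calculation.
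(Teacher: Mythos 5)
This theorem is not proved in the paper at all: it is imported as a black box from Goldberg--Plotkin--Shannon, via Corollary 3.15 in Barenboim--Elkin's book, and your argument is exactly the standard proof given there (start from the identifiers as an $n$-coloring, run Linial's one-round cover-free-family reduction $\log^\ast n + O(1)$ times, then delete one color class per round down to $\Delta+1$). So in approach you match the cited source. One quantitative point deserves care: with only the bound $m = O(\Delta^2 \log k)$ for $\Delta$-cover-free families, the iteration $k_{i+1} = O(\Delta^2 \log k_i)$ stabilizes at $k^\ast = \Theta(\Delta^2 \log \Delta)$, not $O(\Delta^2)$, so your shift-down phase would take $O(\Delta^2\log\Delta)$ rounds, which is slightly weaker than the stated $O(\Delta^2) + \log^\ast n$. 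To get the stated bound one uses, for the last reduction step, the explicit polynomial (finite-field) construction of cover-free families, which for $k = \Delta^{O(1)}$ gives ground set size $O(\Delta^2)$ with no logarithmic factor; you allude to this construction but credit it with the same $O(\Delta^2\log k)$ bound, so the final ``stabilizes at $O(\Delta^2)$'' claim does not follow from what you wrote. This slack is harmless for the way the theorem is used in the paper (any bound of the form $g(\Delta) + O(\log^\ast n)$ would do there), but as a proof of the literal statement it is a small gap.
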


    %Alternatively, we could use the $O(\Delta^2)$-vertex-coloring algorithm of Linial \cites{LinialColoring}[Corollary 3.24]{BE} with running time $O(1) + \log^\ast n$, or any other $O_\Delta(1)$-vertex-coloring algorithm with running time $O_\Delta(\log^\ast n)$.

    Now we describe our $(\Delta+1)$-edge-coloring algorithm. Consider the graph $G^\ast$ with vertex set $V(G^\ast) \defeq E(G)$ in which two distinct edges of $G$ are adjacent if and only if they are joined by a path of length at most $2R$ in $G$. %Note that $\Delta(G^\ast) < (2\Delta)^{2R}$. %the maximum degree of $G^\ast$ is less than $(2\Delta)^{2R}$.
    A single communication round in the \LOCAL model on $G^\ast$ can be simulated by $O(R)$ communication rounds in the \LOCAL model on $G$. Hence, since $\Delta(G^\ast) < (2\Delta)^{2R}$, we can use Theorem \ref{theo:dist_linial} to compute a $(2\Delta)^{2R}$-vertex-coloring $\psi$ of $G^\ast$ in $O(R\log^*n + R(2\Delta)^{4R})$ rounds. %As a single round in $H$ corresponds to $2R$ rounds in $G$, we can compute a $(2\Delta)^{4R}$-distance-$2R$-edge-coloring $\phi$ \todo{notation} of $G$ in $O(2R\log^*n)$ rounds.
    %For any edges $e \neq f$ of $G$ such that $\psi(e) = \psi(f)$, we must have $d(e, f) > 2R$. 
        %Therefore,
        Set $q \defeq (2\Delta)^{2R}$ and for each $0 \leq i < q$, let
        \[
            C_i \,\defeq\, \set{e \in E(G)\,:\, \psi(e) = i}.
        \]
        Next we iteratively compute a sequence of proper partial $(\Delta+1)$-edge-colorings $\phi_0$, \ldots, $\phi_q$, where each $\phi_i$ is defined on the edges in $C_0 \cup \ldots \cup C_{i-1}$, as follows. Start with $\phi_0$ being the empty coloring. Once $\phi_{i}$ has been computed, the endpoints of every edge $e \in C_i$ survey their closed $R$-neighborhoods and arbitrarily pick a connected $e$-augmenting subgraph $H_e$ with at most $R$ edges. (Such $H_e$ exists by Lemma~\ref{lemma:small_aug_subexp}.) Since the edges in $C_i$  are not adjacent in $G^\ast$, the graphs $H_e$ for distinct $e \in C_i$ must be vertex-disjoint. Thus, we can augment the coloring $\phi_{i-1}$ using all the graphs $\set{H_e \,:\, e \in C_i}$ simultaneously without creating any conflicts and let $\phi_{i+1}$ be the resulting coloring. Note that given $\phi_{i}$, we compute $\phi_{i+1}$ in at most $O(R)$ rounds (since no communication at distances greater than $R$ is necessary). The final coloring $\phi \defeq \phi_q$ is the desired proper $(\Delta+1)$-edge-coloring of $G$. The total number of rounds needed to compute it is
        \[
            O(R \log^\ast n + R(2\Delta)^{4R} + Rq) \,=\, O(R \log^\ast n + R(2\Delta)^{4R} + R(2\Delta)^{2R}). 
        \]% in parallel find connected $e$-augmenting subgraphs with at most $R$ edges for every edge $e \in C_i$ in $O(R)$ rounds.   perform the following steps: %we can consider the following algorithm:
%\begin{enumerate}[label=\ep{\normalfont\arabic*}]
    %\item Define a new edge coloring $\psi$ such that $\psi(e) = \blank$ for every $e \in E(G)$.
    %Set $i = 1$.
%
    %\item\label{step:compute_aug} Define $C_i \defeq \set{e \in E(G)\,:\, \phi(e) = i}$.
    %Compute an $e$-augmenting subgraph under $\psi$ for each edge $e \in C_i$ as in Theorem \ref{theo:small_aug}. 
%
    %\item\label{step:aug_continue} Augment $\psi$ with the subgraphs for each edge $e$.
    %Continue for $i = i + 1$ until all edges are colored. 
%\end{enumerate}
%Note that it takes $O(R)$ rounds to compute the augmenting subgraphs in step \ref{step:compute_aug} and $O(R)$ rounds to augment in step \ref{step:aug_continue}.
%It follows that, given $\phi$, we can compute $\psi$ in $O(R(2\Delta)^{4R})$ rounds.
%In particular, we can compute a $(\Delta+1)$-edge-coloring of $G$ in $O(R(\log^*n + (2\Delta)^{4R}))$ rounds.
Since $\Delta \leq f(1)$, this is at most $C_f\log^* n$ for some constant $C_f$ depending only on $f$, as desired.

\printbibliography

\end{document}